\documentclass[12pt, letterpaper]{article}

\usepackage{fullpage}
\usepackage{amsfonts}
\usepackage{amsmath}
\usepackage{amsthm}
\usepackage{cite}
\usepackage{graphicx}
\usepackage[all]{xy}

\newcommand{\F}{\mathbb{F}}
\newcommand{\Z}{\mathbb{Z}}
\newcommand{\C}{\mathbb{C}}
\newcommand{\Q}{\mathbb{Q}}
\newcommand{\p}{\mathfrak{p}}
\newcommand{\q}{\mathfrak{q}}
\newcommand{\Imm}{\operatorname{Im}}
\newcommand{\Cl}{\operatorname{Cl}}
\newcommand{\Eva}{\operatorname{G}}
\newcommand{\GL}{\operatorname{GL}}
\newtheorem{theorem}{Theorem}
\newtheorem{lemma}{Lemma}
\newtheorem{corollary}{Corollary}
\newtheorem{remark}{Remark}

\begin{document}
\title{Solvable Number Field Extensions of Bounded Root Discriminant}
\author{Jonah Leshin}
\date{December 12, 2011}
\maketitle
\begin{abstract}
 Let $K$ be a number field and $d_K$ the absolute value
  of the discriminant of $K/\Q$. We consider the root discriminant
$d_L^{\frac{1}{[L:\Q]}}$ of extensions $L/K$. We show that for any $N>0$ and any positive
integer $n$, the set of
length $n$ solvable extensions of $K$ with root discriminant less than $N$ is
finite. The result is motivated by
the study of class field towers.
\end{abstract}

\section{Introduction}\label{intro}
\par Let $K$ be a number field with Hilbert class field $H_K$. Set
$K=K^{(0)}$ and define $K^{(i)}:=H_{K^{(i-1)}}$ for all $i\geq 1$. Let
$K^{(\infty)}=\cup_iK^{(i)}$. We say $F$ has finite class field tower
if $K^{(\infty)}$ is a finite extension of $\Q$, i.e., if the
$K^{(n)}$ stabilize for large $n$. If $K^{(\infty)}$ is infinite over
$\Q$, $K$ has infinite class field tower. The main theorem on class
field towers is that of Golod and Shafarevich, who proved in 1964 that
number fields with infinite class field tower exist \cite{MR0218331}. 
\par We define the root discriminant of $K$ to be
\begin{align*} 
rd(K):=d_K^{\frac{1}{[K:\Q]}}.
\end{align*}
 Given a tower of number
fields $L/K/F$, we have the following equality of ideals of $F$:
\begin{align}
 d_{L/F}=N_{K/F}(d_{L/K})(d_{K/F})^{[L:K]}, \label{(1)}
\end{align}
where $d_{L/F}$ denotes the relative discriminant. (With this
notation, $d_F$ denotes the absolute value of $d_{F/\Q}$). 
It follows from \eqref{(1)} that if $L$ is an extension of $K$, then
$rd(K)\leq rd(L)$, with equality if and only if $d_{L/K}=1$, i.e.,
$L/K$ is unramified at all finite primes. Thus if $K$ has infinite
class field tower, then all fields $K^{(i)}$ have the same
root discriminant; in particular, the set 
\begin{align*}
Z_{N,K}:=\{L:L/\Q \textrm{ finite}, L\supseteq K, \textrm{ and } rd(L)\leq N\}
\end{align*}
is infinite for $N\geq rd(K)$.
\par From discriminant bounds originally due to Odlyzko and others, there exist only finitely many number fields $K$ with $rd(K)\leq
\Omega:=4\pi e^{\gamma}\approx 22 $, where $\gamma$ is Euler's
constant, and this number can be improved to $2\Omega$ if we assume
the Generalized Riemann Hypothesis \cite{MR1890578}. It follows that a number field with infinite class field tower
must have root discriminant larger than $\Omega$. Martinet
has constructed a number field with infinite class field tower and
root discriminant $\approx 92.4$. Using tamely ramified class field
towers, Hajir and Maire gave an example of a number field $K$ with
$Z_{82.2,K}$ infinite \cite{MR1890578}. 
\par We will not be concerned with specific values of root
discriminants but rather with the following question: fix a number
field $K$ and an arbitrary (large) real number $N>0$. How can infinite
subsets of $Z_{N,K}$ arise? For example, fix a positive integer $n$. If $N$ were large enough, could the following set
be infinite:
\begin{align*}
\{L:L/\Q \textrm{ finite}, L/K \textrm{ solvable length $n$, }\textrm{and }
rd(L)\leq N\} \quad ?
\end{align*}
The answer to the question is no, and this is the main theorem of the
paper. 
\begin{theorem}\label{abelian}
Fix a number field $K$, a positive integer $n$, and a positive real number $N$. The set
\begin{align*}
Y_{n,N,K}:=\{L:L/\Q \textrm{ finite}, L/K \textrm{ solvable length n, } \textrm{and }
rd(L)\leq N\} 
\end{align*}
is finite. 
\end{theorem}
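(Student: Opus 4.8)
I would prove the theorem by induction on $n$. The case $n=0$ is trivial ($Y_{0,N,K}$ is $\{K\}$ or empty), and we may assume $N\ge rd(K)$, since otherwise $Y_{n,N,K}=\emptyset$. For the inductive step take $L\in Y_{n,N,K}$, realised by a tower $K=L_0\subseteq L_1\subseteq\cdots\subseteq L_n=L$ with abelian layers, and set $M=L_1$. Then $M/K$ is abelian, the tail $L_1\subseteq\cdots\subseteq L_n$ exhibits $L/M$ as solvable of length $\le n-1$, and $rd(M)\le rd(L)\le N$ by the inequality $rd(K')\le rd(L)$ for $K'\subseteq L$ recorded after \eqref{(1)}. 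Granting the base case $n=1$, the field $M$ lies in the finite set $Y_{1,N,K}$; for each such $M$ the inductive hypothesis (for smaller length, over $M$) makes the set of admissible $L$ finite, and since $Y_{n,N,K}$ is contained in this finite union of finite sets it is finite. So it suffices to prove: \emph{for every number field $K$ and every $N>0$ there are only finitely many abelian extensions $L/K$ with $rd(L)\le N$.}

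Fix such an abelian $L/K$. For each prime $\mathfrak p$ of $K$ ramified in $L$, let $e_\mathfrak p$ be its ramification index and $d_\mathfrak p$ the exponent of the different of $L_\mathfrak P/K_\mathfrak p$ at a prime $\mathfrak P\mid\mathfrak p$ (independent of $\mathfrak P$ since $L/K$ is Galois); then $d_{L/K}=\prod_\mathfrak p\mathfrak p^{a_\mathfrak p}$ with $a_\mathfrak p=[L:K]\,d_\mathfrak p/e_\mathfrak p$, and a short computation with \eqref{(1)} for $F=\Q$ gives
\[
\sum_{\mathfrak p}\frac{d_\mathfrak p}{e_\mathfrak p}\,\log N(\mathfrak p)\;=\;[K:\Q]\log\frac{rd(L)}{rd(K)}\;\le\;[K:\Q]\log\frac{N}{rd(K)}\;=:\;C.
\]
The crux is the local estimate $c_\mathfrak p\le 2d_\mathfrak p/e_\mathfrak p$ for the conductor exponent $c_\mathfrak p:=\operatorname{ord}_\mathfrak p\mathfrak f(L/K)$. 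To prove it, put $G=\operatorname{Gal}(L_\mathfrak P/K_\mathfrak p)$ and let $m$ be the last jump of the upper-numbering ramification filtration $\{G^u\}$; by Hasse--Arf $m\in\Z$, one has $c_\mathfrak p=m+1$, and a character $\chi$ of $G$ has $\operatorname{ord}_\mathfrak p\mathfrak f(\chi)=c_\mathfrak p$ exactly when $\chi|_{G^m}\ne 1$. Since $G^m\ne 1$, at least $|G|-|G/G^m|=|G|(1-|G^m|^{-1})\ge|G|/2$ characters have this property, so the conductor--discriminant formula for $L_\mathfrak P/K_\mathfrak p$ yields
\[
f_\mathfrak p d_\mathfrak p\;=\;\sum_{\chi}\operatorname{ord}_\mathfrak p\mathfrak f(\chi)\;\ge\;c_\mathfrak p\cdot\frac{|G|}{2}\;=\;c_\mathfrak p\cdot\frac{e_\mathfrak p f_\mathfrak p}{2},
\]
whence $c_\mathfrak p\le 2d_\mathfrak p/e_\mathfrak p$.

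Combining the two displays, $\sum_\mathfrak p c_\mathfrak p\log N(\mathfrak p)\le 2C$. Since $c_\mathfrak p\ge 1$ for every ramified $\mathfrak p$, this forces $N(\mathfrak p)\le e^{2C}$, so only finitely many primes can ramify in $L$, and for each of them $c_\mathfrak p\le 2C/\log N(\mathfrak p)\le 2C/\log 2$. Hence $\mathfrak f(L/K)$ divides the fixed modulus $\mathfrak m:=\mathfrak m_\infty\prod_{N(\mathfrak p)\le e^{2C}}\mathfrak p^{\lfloor 2C/\log 2\rfloor}$, where $\mathfrak m_\infty$ is the product of the archimedean places of $K$; so $L$ is contained in the ray class field of $K$ of modulus $\mathfrak m$ — a fixed finite extension of $K$, which has only finitely many subfields. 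This establishes the base case and with it the theorem. The step I expect to be the real obstacle is the local inequality $c_\mathfrak p\le 2d_\mathfrak p/e_\mathfrak p$: it encodes the point that a bound on $rd(L)$ controls only the per-degree different $d_\mathfrak p/e_\mathfrak p$ at each ramified prime, and yet this must still bound the depth of the wild ramification, hence the conductor; the rest (the inductive peeling-off of an abelian layer and the finiteness of ray class fields) is routine, and one can even avoid Hasse--Arf by running the same counting argument against the real-valued upper-numbering filtration.
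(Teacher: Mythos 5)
Your proof is correct, and while the outer induction (peel off an abelian layer, invoke finiteness of $Y_{1,N,K}$, then induct over each of the finitely many intermediate fields) is essentially the paper's closing argument, your proof of the crucial abelian base case is a genuinely different and much shorter route. The paper proves the $n=1$ case by first reducing to $K/\Q$ Galois and $L/K$ cyclic, then splitting into two cases according to whether the ramification indices $e_{L_l}(\mathfrak{p})$ stay bounded, and in the harder case carrying out a delicate analysis of composita of ray class fields (Lemma \ref{l1}) and of the lower ramification filtration of a cyclic totally ramified $p$-extension (Lemma \ref{lemma J}) to force the truncated sum $\frac{1}{|\Gamma_0|}\sum(|\Gamma_i|-1)$ to blow up. You instead prove the single local inequality $c_{\mathfrak{p}}\le 2d_{\mathfrak{p}}/e_{\mathfrak{p}}$ by observing that at least half the characters of the abelian local Galois group are nontrivial on the last upper ramification group and hence have conductor exponent equal to $c_{\mathfrak{p}}$, so the conductor--discriminant formula gives $f_{\mathfrak{p}}d_{\mathfrak{p}}\ge c_{\mathfrak{p}}|G|/2$; combined with the identity $\sum_{\mathfrak{p}}(d_{\mathfrak{p}}/e_{\mathfrak{p}})\log N(\mathfrak{p})=[K:\Q]\log\bigl(rd(L)/rd(K)\bigr)$ this caps the conductor by an explicit modulus $\mathfrak{m}$ depending only on $K$ and $N$, and $L$ sits inside the ray class field mod $\mathfrak{m}$. (Your appeal to Hasse--Arf to make $c_{\mathfrak{p}}=m+1$ is legitimate for abelian local extensions, and, as you note, avoidable.) This buys two real advantages: it eliminates the entire case analysis, the reduction to $K/\Q$ Galois, and the auxiliary Lemmas \ref{l1}--\ref{mult}; and it is effective --- the modulus $\mathfrak{m}$ and hence the ray class field containing all admissible $L$ are explicitly computable, whereas the paper's Remark explicitly concedes that its proof yields no effective bound on $\#Y_{n,N,K}$. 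What the paper's longer route provides in exchange is finer information about how the ramification filtration degenerates along a putative infinite family, plus the submultiplicativity lemma $rd(EF)\,rd(E\cap F)\le rd(E)\,rd(F)$, which is of independent interest.
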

\begin{remark}
\begin{itemize}
\emph{
\item Taking $n=1$ gives finiteness for abelian extensions. The general solvable case follows by induction from the $n=1$
case, and the proof of the $n=1$ case occupies the bulk of the paper. We
set $Y_{N,K}:=Y_{1,N,K}$.
\item Rather than considering the root discriminant of extensions $L$ of $K$, we
could equivalently consider the quantity
$(N_{K/\Q}{d_{L/K}})^{1/[L:K]}$. This is evident by \eqref{(1)}.
\item Odlyzko mentions in \cite{MR1061762} that $Y_{N,\Q}$ is known to
  be finite for any $N$. }
\end{itemize} 
\end{remark}
From here on, all field extensions are assumed to be finite unless
otherwise stated. 

\section{Discriminants and Ramification Groups}\label{sec2}
Let $L/K$ be a Galois extension of local fields, with $K$ a finite
extension of $\Q_p$. In \cite{Zahl}, Hilbert gives the following formula for the
relative different $D_{L/K}$ in terms of the ramification groups $G_i$
of
$L/K$:
\begin{align*}
v_L(D_{L/K})=\sum_{i=0}^{\infty}(|G_i|-1), 
\end{align*}
 where $v_L$ denotes the normalized $\mathfrak{P}$-adic valuation of a fractional
 ideal of $O_L$, $\mathfrak{P}$ the unique maximal ideal of $O_L$. If
 $L/K$ is now a Galois extension of global fields with $\mathfrak{P}$
 a prime of $L$ lying above a prime $\mathfrak{p}$ of $K$, then
 \begin{align}
 v_{\mathfrak{p}} (d_{L/K})=gf \sum_{i=0}^{\infty}(|G_i|-1), \label{3}
\end{align}
where $G_i$ are the ramification groups of 
$\mathfrak{P}/\mathfrak{p}$, $g$ is the number of primes of $L$ above
$\mathfrak{p}$, and $f$ is the residue degree of
$\mathfrak{P}/\mathfrak{p}$. So for a Galois extension $K/\Q$, we obtain 
\begin{align*}
v_p(rd(K))=\frac{1}{|G_0|} \sum_{i=0}^{\infty}(|G_i|-1).
\end{align*} 
Note that if we define the root discriminant of a finite extension
$K_{\mathfrak{p}}$ of $\Q_p$ to be
\begin{align*}
 rd(K_{\mathfrak{p}})=p^{\frac{v_{\Q_p}(d_{K_\mathfrak{p}/\Q_p})}{[K_\mathfrak{p}:\Q_p]}},
\end{align*} then
  $p^{v_p(rd(K))}=rd(K_{\mathfrak{p}})$. 

\section{Proof of Theorem \ref{abelian}}
 Fix a number field $K$ and a real number $N>0$. Our first goal is to
 show that the set
\begin{align*}
X_{N,K}:=\{L:L/K \textrm{  abelian, } L/\Q \textrm{  Galois, and }
rd(L)\leq N \}
\end{align*}
is finite in the case when $K/\Q$ is Galois. 
\par  If $E/F$ is a Galois extension of number fields ramified at a
prime $\mathfrak{p}$ of $F$ with $e=e_E(\mathfrak{p})$, then by \eqref{3},
$\mathfrak{p}^{(e-1)fg}|d_{L/K}$. It follows that if $L$ is a number
field Galois over $\Q$ with $rd(L)\leq N$, then $L/\Q$ can not ramify at any rational
prime $p$ with $p>\sqrt{N}$. 
\par Let $S$ be the union of the real places
of $K$ and the set of primes of $K$ lying
above the rational primes $p$ with $p \leq
\sqrt{N}$. Suppose that $X_{N,K}$ is infinite. Then there exists an
increasing sequence of natural numbers $n_l$ such that
$[L_{n_l}:K]=n_l$ and $L_{n_l}\in X_{N,K}$. For a fixed positive integer
$m$, the maximal abelian
extension of $K$ of exponent $m$ that is unramified outside $S$ is
finite (see, e.g.,\cite{MR2514094}). Thus we may assume that $L_{n_l}/K$ is
\emph{cyclic} for each $l$. We deal with the following
two cases separately:

 \vspace{10pt}\noindent\textbf{Case I:}\enspace
 For every $\mathfrak{p} \in S$,
$\limsup\textrm{ } e_{L_{n_l}}(\mathfrak{p})< \infty$ (the $\limsup$ being
indexed by $l$). 
\newline
\par To ease notation, write $L_l$ for $L_{n_l}$. Write $S=\{\mathfrak{p}_j\}$. Let $\mathfrak{f}_l=\mathfrak{f}(L_l/K)$ be the conductor of $L_l/K$. As $l$ gets arbitrarily large, there exists
$j$ such that the power $a_j$ of $\mathfrak{p}_j$ dividing
$\mathfrak{f}_l$ gets
sufficiently large. This is because $L_l/K$ can only be ramified at
the primes in $S$, so its conductor is divisible by only these primes, which means
$L_l$ is contained in the ray class field of $K$ modulo the product of the infinite
real places of $K$ and a power $m_l$ of the product of
the finite primes of $K$ ramifying in $L_l/K$. As $[L_l:K]$ increases, the minimal such $m_l$ increases, which by definition of the ray
class field implies $a_j$ increases for some $j$. 
\par Set $\mathfrak{p}=\mathfrak{p}_j$ and $a=a_j$. Let $\mathfrak{P}_l$ be a prime of $L_l$ lying above
$\mathfrak{p}$. Write $\hat{L}_l$ for
$L_{l_{\mathfrak{P}_l}}$, and to keep notation consistent write
$\hat{K}$ for $K_{\mathfrak{p}}$. Define the group of $n$-units $U^n$ of
$\hat{K}$ to be the group of units
of $O_{\hat{K}}$ that are congruent to $1 \pmod{ \mathfrak{p}^n}$. The $\mathfrak{p}$-contribution to 
$\mathfrak{f}_l$ is the conductor of the abelian extension of local
fields $\hat{L}_l/\hat{K}$, which we denote by
$\hat{\mathfrak{f}}_l$. Let $\theta$ be the local reciprocity map
$\hat{K}^*\to \Eva(\hat{L}_l/\hat{K})$. Then $\hat{\mathfrak{f}}_l=\mathfrak{p}^{b_l}$,
where $b_l$ is by definition the smallest integer $n$ satisfying
$\theta(U^n)=1$. As $\theta$ maps $U^n$ onto the
$n^{th}$ upper ramification group $_lG^n$ of $_lG:=\Eva(\hat{L}_l/\hat{K})$, we
see that $_lG^{b_l-1}$ is non-trivial (in fact, $_lG^{b_l-1}$ is the last
non-trivial ramification group in the sense that
$_lG^{b_l-1+\epsilon}=1$ for any $\epsilon>0$). For a discussion of
the upper ramification groups, see \cite{MR554237}. 

\par By the previous two paragraphs, $l \to \infty$ implies $ a \to
\infty $, which implies $ b_l-1 \to \infty$. As $b_l-1 \to \infty$, the
largest integral index $c_l$ for which the $c_l$th lower
ramification group $_lG_{c_l}$ of $_lG$ is non-trivial tends to
$\infty$.  Now, let $p$ be the rational prime over which $\mathfrak{p}$
lies. We have $_lG\leq
\Eva(\hat{L}_l/\Q_p):=$$_l\Gamma$ and $_lG_n \leq$ $_l\Gamma_n$ for any $n$. (It follows from the
definition of the ramification groups that $_l\Gamma_{e(n+1)-1}=$$_lG_n$, where
$e=e(\mathfrak{p}/p)$). Therefore, as $l$ increases, the largest $n$ for which
$_l\Gamma_n$ is non-trivial increases as well. From
Section 2 we have that the exponent of $p$ in $rd(\hat{L}_l)$ is
\begin{align}
\frac{1}{|_l\Gamma_0|} \sum_{i=0}^{\infty}(|_l\Gamma_i|-1). \label{disc}
\end{align}
 The condition of Case I
means that $_l\Gamma_0$ can be bounded independently of $l$. So as
$l\to \infty$, \eqref{disc} does as well, which in turn implies that
$rd(\hat{L}_l)$, and
therefore $rd(L_l)$, tends to $\infty$, a contradiction. $\qed$

\par\vspace{10pt}\noindent\textbf{Case II:}\enspace: There exists $\mathfrak{p}\in S$ such that
  $\limsup\textrm{ } e_{L_{n_l}}(\mathfrak{p})= \infty$.  
\newline
\par Let $\mathfrak{m}=\mathfrak{m}_0\mathfrak{m}_{\infty}$ be a
modulus of $K$, where
$\mathfrak{m}_0$ is a product of finite primes and
$\mathfrak{m}_{\infty}$ is a product of $r$ real places of $K$. We have the following exact sequence from class field theory
\begin{align}
O^* \to (O/\mathfrak{m})^*\to \Cl_K^{\mathfrak{m}} \to \Cl_K \to 1,  \label{exact}
\end{align}
where $O_K^*$ are the units of the ring of integers $O=O_K$,
$\Cl_K$ is the ideal class group of $K$, $\Cl_K^{\mathfrak{m}}$ is
the ray class group of $K$ modulo $\mathfrak{m}$, and
$(O/\mathfrak{m})^*$ is defined to be $(O/\mathfrak{m}_0)^*\times
\{\pm 1\}^r$.

\par Let $p$ be the rational prime lying below $\p$. Because $L_l/\Q$ is assumed
to be Galois, the assumption of Case II implies that $\limsup\textrm{ } e_{L_{n_l}}(\mathfrak{p}')= \infty$ for every prime $\p'$ of $K$
with $\p'\cap \Z=(p)$. For any rational prime $q$ lying below a prime of $S$, define $\tilde{q}=\prod_{\q\in S,\q|q}\q$. For any
modulus $\mathfrak{m}$, let $R_{\mathfrak{m}}$ denote the ray class field of $K$ modulo
$\mathfrak{m}$. Let $\mathfrak{n}$ be the modulus
$\prod_{\mathfrak{q}\in
  S}\mathfrak{q}$ (note that whether a
prime of $K$ is contained in $S$ depends only on the rational prime
over which it lies). $L_l$ is contained in
$R_{\mathfrak{n}^{s(l)}}$ for some positive integer $s(l)$. Our
intermediate goal is to show
\begin{align*}
\limsup_{l \to \infty} [L_l\cap R_{\tilde{p}^{s(l)}}:K]=\infty.
\end{align*}
\par \textbf{Notation}: Suppose $\{E_l/F_l\}_l$ is a set of number field extensions
indexed by $l$ with $K\subseteq F_l$ for all $l$. We say $E_l/F_l$ is $(*)$ if the
ramification above $\mathfrak{p}$ of $E_l/F_l$ is bounded
independently of $l$. 

\par Because $\limsup e_{L_l}(\mathfrak{p})=\infty$, to prove our
intermediate goal, it suffices to show
that $L_l/L_l\cap R_{\tilde{p}^{s(l)}}$ is $(*)$. Let  $T$ be the set consisting of the
rational primes $\leq \sqrt{N}$ and the infinite real place of $\Q$, i.e., the set of rational primes below the primes
of $S$. Consider the field diagram below. It follows from the exact sequence
\eqref{exact} that the $p$-part of $[\prod_{q\in
  T}R_{\tilde{q}^{s(l)}}:R_{\tilde{p}^{s(l)}}]$, and therefore of
$[L_l\cap \prod_{q\in
  T}R_{\tilde{q}^{s(l)}}:L_l \cap R_{\tilde{p}^{s(l)}}]$, is bounded
independently of $l$. All the ramification above $\p$ in $\prod_{q \in
  T}R_{\tilde{q}^{s(l)}}/L_l \cap R_{\tilde{p}^{s(l)}}$ takes place in
$R_{\tilde{p}^{s(l)}}/L_l \cap R_{\tilde{p}^{s(l)}}$, and by \eqref{exact}, the
prime-to-$p$ part of $e_{R_{\tilde{p}^{s(l)}}}(\p)$ is bounded
independently of $l$. Therefore $L_l \cap \prod_{q\in
  T}R_{\tilde{q}^{s(l)}}/L_l \cap R_{\tilde{p}^{s(l)}}$ is $(*)$. Thus
it suffices to show that $L_l/L_l \cap \prod_{q\in
  T}R_{\tilde{q}^{s(l)}}$ is $(*)$. We accomplish this with the
following lemma. 

\begin{lemma}\label{l1}
Notation as above, $[R_{\mathfrak{n}^{s(l)}}:\prod_{q\in
  T}R_{\tilde{q}^{s(l)}}]$ is bounded independently of $l$. 

\end{lemma}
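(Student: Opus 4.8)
The plan is to rephrase the statement through class field theory and then reduce it to a bound on indices of unit groups that does not involve $s(l)$. Abbreviate $s := s(l)$, and note that $\mathfrak{n}^s = \prod_{q\in T}\tilde{q}^s$ with pairwise coprime factors. By the Artin isomorphism $\Eva(R_{\mathfrak{n}^s}/K)\cong\Cl_K^{\mathfrak{n}^s}$, the subfield $R_{\tilde{q}^s}$ corresponds to the kernel of the natural surjection $\Cl_K^{\mathfrak{n}^s}\to\Cl_K^{\tilde{q}^s}$, so the compositum $\prod_{q\in T}R_{\tilde{q}^s}$ corresponds to $\bigcap_{q\in T}\ker(\Cl_K^{\mathfrak{n}^s}\to\Cl_K^{\tilde{q}^s})$ and
\begin{align*}
\big[R_{\mathfrak{n}^s}:\textstyle\prod_{q\in T}R_{\tilde{q}^s}\big] = \Big|\ker\big(\Cl_K^{\mathfrak{n}^s}\to\textstyle\prod_{q\in T}\Cl_K^{\tilde{q}^s}\big)\Big|.
\end{align*}
By the Chinese Remainder Theorem $(O/\mathfrak{n}^s)^* = \prod_{q\in T}(O/\tilde{q}^s)^*$ (the archimedean factor being the $\{\pm1\}^r$ of \eqref{exact}), and a diagram chase with \eqref{exact} for the moduli $\mathfrak{n}^s$ and each $\tilde{q}^s$ identifies the kernel above with the cokernel of the natural injection $O^*/N_s\hookrightarrow\prod_{q\in T}O^*/N_{q,s}$, where $N_{q,s}:=\{u\in O^*:u\equiv1\pmod{\tilde{q}^s}\}$ and $N_s:=\bigcap_{q\in T}N_{q,s}$. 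Writing $E_{q,s}$ (resp.\ $E_s$) for the image of $O^*$ in $(O/\tilde{q}^s)^*$ (resp.\ $(O/\mathfrak{n}^s)^*$), it therefore suffices to bound $[\prod_{q\in T}E_{q,s}:E_s]$ independently of $s$.

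I would prove this last bound one rational prime $\ell$ at a time, comparing $\ell$-primary parts. For a finite prime $q\in T$ with $q\ne\ell$, the quotient $N_{q,1}/N_{q,s}$ injects into the finite $q$-group $\prod_{\q\mid q}(1+\q O_\q)/(1+\q^s O_\q)$; hence the $\ell$-primary parts of $E_{q,s}=O^*/N_{q,s}$ and of $E_{q,1}=O^*/N_{q,1}$ agree, so that factor contributes to the $\ell$-part of $\prod_{q\in T}E_{q,s}$ an amount independent of $s$. The archimedean factor $E_{q_\infty,s}=\Imm(O^*\to\{\pm1\}^r)$ does not depend on $s$ at all. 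Thus the $\ell$-part of $\prod_{q\in T}E_{q,s}$ can grow with $s$ only through the single factor $E_{\ell,s}$, and only when $\ell\in T$. But when $\ell\in T$ we have $N_s\subseteq N_{\ell,s}$, so $E_s=O^*/N_s$ surjects onto $O^*/N_{\ell,s}=E_{\ell,s}$; passing to $\ell$-parts, the $\ell$-part of $E_s$ is at least as large as that of $E_{\ell,s}$, so the growing factor cancels in the quotient. Hence the $\ell$-part of $[\prod_{q\in T}E_{q,s}:E_s]$ is bounded by $\prod_{q\in T\setminus\{\ell\}}|(E_{q,1})_\ell|$, a quantity independent of $s$, and this is trivial for all $\ell$ outside the finite set of primes dividing $\prod_{q\in T}|E_{q,1}|$. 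Multiplying over that finite set yields a bound independent of $s=s(l)$, proving the lemma.

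The main obstacle is precisely this final bound: both $[O^*:N_{q,s}]$ and $[O^*:N_s]$ tend to infinity with $s$, so one must see that their mismatch — the cokernel of $O^*/N_s\hookrightarrow\prod_qO^*/N_{q,s}$ — stays bounded. The facts to exploit are that $[O^*:N_{q,s}]$ grows only in its $q$-part (because $1+\q O_\q$ is pro-$q$), which isolates the blow-up at $\ell$ into the single factor $E_{\ell,s}$, and that $N_s\subseteq N_{\ell,s}$ produces the surjection $E_s\twoheadrightarrow E_{\ell,s}$ that absorbs it. One should also check the bookkeeping around the archimedean place of $\Q$ in $T$ (for which $\tilde{q}$ carries no $s$-dependence) and confirm that the identification of the kernel of $\Cl_K^{\mathfrak{n}^s}\to\prod_q\Cl_K^{\tilde{q}^s}$ as a cokernel of unit groups follows cleanly from \eqref{exact}; these steps are routine.
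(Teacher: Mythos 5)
Your proof is correct and takes essentially the same approach as the paper's: both reduce the index via \eqref{exact} to the defect of the global unit image inside $\prod_{q\in T}(O/\tilde{q}^{s})^*$, and both rest on the fact that the $s$-dependence of $(O/\tilde{q}^{s})^*$ is concentrated in its $q$-primary part, so the growing parts at distinct $q$ cannot interfere. The only difference is organizational --- you handle all of $T$ at once by comparing $\ell$-primary parts, with your surjection $E_s\to E_{\ell,s}$ playing the role of the paper's pairwise inequality $|\phi(H)||\psi(H)|/|H|\leq\gcd\big(|\phi(H)|,|\psi(H)|\big)$.
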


\begin{figure}[t]
\begin{center}
  \setlength{\unitlength}{2.5cm}
  \begin{picture}(2,2)(2,0)
    \put(3.05,0){$K$}
    \put(2.87,.6){$L_l\cap R_{\tilde{p}^{s(l)}}$}
    \put(1.8,1.1){$L_l\cap \prod_{q\in
    T}R_{\tilde{q}^{s(l)}}$}
    \put(3.7, 1.1){$ R_{\tilde{p}^{s(l)}}$}
    \put(2.9,1.6){$\prod_{q\in
    T}R_{\tilde{q}^{s(l)}}$}
    \put(3,2.2){$R_{\mathfrak{n}^{s(l)}}$} 
    \put(1.95,1.7) {$L_l$}
    \qbezier(3.12,.18)(3.12,.53)(3.12,.53)
    \qbezier(3.14, .75)(3.14,.75)(3.66,1.05)
    \qbezier(3.08, .75)(3.08, .75)(2.55, 1.05)
    \qbezier(2.5,1.25)(2.5, 1.25) (3.08, 1.53)
    \qbezier(3.66,1.25)(3.66,1.25)(3.14, 1.53)
    \qbezier(3.11, 1.75)(3.11,1.75)(3.11,2.12)
    \qbezier(2.25, 1.30)(2.25,1.30) (2,1.6)
    \qbezier(2.1, 1.87)(2.1,1.87)(2.95,2.2)  
    \end{picture}
\end{center}
     \end{figure}

\begin{proof}
The general case of the lemma will follow if we can show that
for any positive integers $a,b$:
\begin{align*}
[R_{\tilde{p}^a\tilde{q}^b}:R_{\tilde{p}^a}R_{\tilde{q}^b}]\leq \prod_{\p|p}(N\mathfrak{p}-1)\prod_{\q|q}(N\mathfrak{q}-1),
\end{align*}
where $q$ is a finite prime of $T$ distinct from $p$, and the products
are over primes of $K$ (if $q$
were the infinite place of $\Q$, one could take $2^{r_1}$ for the right-hand side of
the inequality, $r_1$ being the number of real
places of $K$). Define maps
\begin{align*}
\pi_{\tilde{p}\tilde{q}}:O^*\to (O/\tilde{p}\tilde{q})^*, \hspace{.2cm}
\pi_{\tilde{p}}:O^*\to (O/\tilde{p})^*, \hspace{.2cm}
 \pi_{\tilde{q}}:O^*\to (O/\tilde{q})^*.
\end{align*}

Let $H_K$ be the Hilbert class field of $K$. Looking at \eqref{exact} and using that $R_{\tilde{p}^a}\cap
R_{\tilde{q}^b}=H_K$ and that
$(O/\tilde{p}\tilde{q})^*\cong (O/\tilde{p})^*\times (O/\tilde{q})^*$, we see that
\begin{align*}
[R_{\tilde{p}^a\tilde{q}^b}:R_{\tilde{p}^a}R_{\tilde{q}^b}]=\frac{|\Imm (\pi_{\tilde{p}})||\Imm( \pi_{\tilde{q}})|}{|\Imm (\pi_{\tilde{p}\tilde{q}})|}.
\end{align*}
We have a decomposition
\begin{align*}
(O/\tilde{p}^a)^*\cong \prod_{\p|p}(O/\mathfrak{p})^* \times \bigl((1+\p)/(1+\p^a)\bigr)\cong 
\Big(\prod_{\p|p} \Z/(N\mathfrak{p}-1)\Big) \times P,
\end{align*}
where $P=\prod_{\p|p}\bigl((1+\p)/(1+\p^a)\bigr)$ is a $p$-group.
We have the analogous decomposition of $(O/\tilde{q}^b)^*$. Let $H=\Imm(\pi_{\tilde{p}\tilde{q}})$. Define the natural
projections 
\begin{align*}
\phi:(O/\tilde{p}\tilde{q})^*\to (O/\tilde{p})^*,\hspace{.2cm}
\psi:(O/\tilde{p}\tilde{q})^*\to (O/\tilde{q})^*,\\
\textrm{so that }
\Imm(\pi_{\tilde{p}})=\phi(H) \textrm{ and } \Imm(\pi_{\tilde{q}})=\psi(H).
\end{align*}
Since $|\phi(H)|$ and $|\psi(H)|$ both divide $|H|$ we obtain
\begin{displaymath}
\frac{|\phi(H)||\psi(H)|}{|H|}\leq \gcd \big(|\phi(H)|,|\psi(H)|\big)
\end{displaymath}
We obtain the result of the lemma by noting that

\begin{displaymath}
\gcd\big(|\phi(H)|,|\psi(H)|\big)\hspace{.1cm}\Big| \hspace{.1cm} \gcd\big(|(O/\tilde{p})^*|,|(O/\tilde{q})^*|\big)\hspace{.1cm}\Big|\hspace{.1cm}\prod_{\p|p}(N\p-1)\prod_{\q|q}(N\q-1).
\end{displaymath} 
\end{proof}

This completes the proof of our intermediate goal. With this in hand,
we now complete the proof that $X_{N,K}$ is finite under the
assumption of Case II. 

 Set $E_l=L_l \cap R_{\tilde{p}^{s(l)}}$, and let $\mathfrak{P}_l$ be a prime of of
$E_l$ lying above $\p$. Since $K/\Q$ is assumed to be Galois, the Galois closure of $E_l/\Q$ is an abelian
extension of $K$ ramified only above $p$, and it is contained in $L_l$
because $L_l/\Q$ is Galois. 
But $E_l$ is the maximal such field; 
 therefore $E_l/\Q$ is Galois. In proving our intermediate goal, we
 showed that $\limsup e_{E_l}(\p)=\infty$. Let $\hat{E}_l=E_{l_{\mathfrak{P}_l}}$ and
$\hat{K}=K_{\mathfrak{p}}$, and
consider the extension of local fields
$\hat{E}_{l}/\hat{K}$ with Galois group $_lG$. From the structure of
$(O/\tilde{p}^{s(l)})^*$ and the exact
sequence \eqref{exact} with $\mathfrak{m}=\tilde{p}^{s(l)}$, it follows
that there exists $C>0$ such that the prime-to-$p$ part of $|_lG|$ is
less than $C$ for all $l$. Let $F_{l}$ be the fixed field of the
inertia subgroup of $\p$ of $\Eva(E_l/K)$, so that $E_l/F_l$ is totally ramified above $\p$. Let $\mathfrak{Q}_l$ be the prime of $F_l$ below $\mathfrak{P}_l$
and set $\hat{F}_l=F_{l_{\mathfrak{Q}_l}}$ so that
$\hat{E}_{l}/\hat{F}_l$ is a totally ramified extension of local fields. Set
$_lH=\Eva(\hat{E}_l/\hat{F}_{l})$. If  $_lJ$ denotes the $p$-Sylow
subgroup of $_lH$, then $\limsup_{l\to \infty}|_lJ|=\infty$. Recall that
we are assuming that $L_l/K$ is cyclic, so all groups in sight are
cyclic. Let
$\hat{M}_l$ be the fixed field of $_lJ$, with maximal ideal $\mathfrak{r}_l$.

\begin{figure}[h]
\begin{center}
  \setlength{\unitlength}{2.5cm}
  \begin{picture}(3,2)(-.3,0)
    \put(3.05,0.05){$\p$}
    \put(3,.6){$\mathfrak{Q}_l$}
    \put(3.05,1.23){$\mathfrak{r}_l$}
    \put(3, 1.8){$\mathfrak{P}_l$}
    \qbezier(3.08,.21)(3.08,.18)(3.08,.53)
    \qbezier(3.08,.8)(3.08,.8)(3.08,1.15)
    \qbezier(3.08, 1.4)(3.08,1.4)(3.08,1.74)
  \end{picture}
 \begin{picture}(1,2)(4.9,0)
 \put(3,0){$\hat{K}$}
    \put(3,.6){$\hat{F}_l$}
    \put(3,1.2){$\hat{M}_l$}
    \put(3, 1.8){$\hat{E}_l$}
    \qbezier(2.96,.05)(2.15,.9)(2.96,1.85)
    \qbezier(3.22,1.84)(3.6,1.5)(3.24,1.25)
    \qbezier(3.22,1.88)(4.4,1.8)(3.20,.66)
    \put(2.25,.9){$_lG$}
    \put(3.455,1.53){$_lJ$}
    \put(3.55,.9){$_lH$}
    \qbezier(3.08,.21)(3.08,.18)(3.08,.53)
    \qbezier(3.08,.8)(3.08,.8)(3.08,1.15)
    \qbezier(3.08, 1.4)(3.08,1.4)(3.08,1.74)
\end{picture}
\end{center}
\end{figure}

\par Fix $l$ for the moment and put $\mathfrak{r}=\mathfrak{r}_l$. In this paragraph and the next, we summarize a few results from \cite{MR0220701} and \cite{MR554237}, and apply them to the situation at hand. For non-negative integers $n$, let $U^n$ be the $n$-units of
$\hat{M_l}$, as defined in the proof of Case I. For any real $v\geq 0$, define
$U^{v}=U^n$, where $n-1<v\leq n$. For all $v \geq 0$, the local reciprocity map
$\theta:\hat{M_l}^*\to$ $ _lJ$ maps $U^v$ onto the upper ramification
group $_lJ^v$. It follows that any jumps in the filtration $J^v$ must
occur at integral values of $v$ (i.e., if $J^{v+\epsilon}\neq
J^v$ for any $\epsilon>0$, then $v$ is an integer), and that for $v\geq 1$, the quotient $_lJ^v/_lJ^{v+1}$ of upper ramification groups is
isomorphic to a quotient of $U^v/U^{v+1}$. For integral $n\geq 1$, we have
isomorphisms $U^n/U^{n+1}\cong\mathfrak{r}^n/\mathfrak{r}^{n+1}\cong \F_{\hat{M}_l}^+$, the
last group being the additive group of the residue field of
$\hat{M}_l$, which is an elementary abelian $p$-group. Since $_lJ$ is
cyclic, we find that for $v\geq 1$, the quotients $_lJ^v/_lJ^{v+1}$
are either trivial or are cyclic groups of order $p$. 
\par For integral values of
$u$, the function $\phi$ satisfying $_lJ_u=$$_lJ^{\phi(u)}$
is given by 
\begin{align}
\phi(u)=\Bigg(\frac{1}{|_lJ_0|} \sum_{i=1}^u|_lJ_i|\Bigg)-1. \label{formula}
\end{align}
By \eqref{formula}, $\phi(n+1)-\phi(n)\leq 1$, from which it follows
that the $_lJ_n/_lJ_{n+1}$ are also either trivial or cyclic of order $p$.
Thus each subgroup of $_lJ$ occurs as $_lJ_n$ for some $n$. Because
jumps in the upper numbering occur at integral values, if $n$ is a non-negative integer with $_lJ_n\neq $$_lJ_{n+1}$, then
$\phi(n)\in \Z$. Using this fact along with \eqref{formula} and
the fact that every subgroup of $_lJ$ occurs as a ramification group,
we derive what is essentially the example on p.76 of \cite{MR554237}:

\begin{lemma}\label{lemma J}
For each $i$, $0\leq i \leq t$,  there exists $n_i$ such that
\begin{align*}
_lJ(i)={}_lJ_{n_i+1}={}_lJ_{n_i+2}=\cdots{}= _lJ_{n_i+p^i},
\end{align*} 
where $|_lJ|=p^t$ and $_lJ(i)$ denotes the unique subgroup of $_lJ$ of
order $p^{t-i}$. Less formally, the subgroup of order $p^{t-i}$ of $_lJ$
occurs as at least $p^i$ lower ramification groups of $_lJ$.
\end{lemma}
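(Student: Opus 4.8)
The plan is to analyze the lower ramification filtration of the cyclic $p$-group ${}_lJ$ using the relationship between lower and upper numbering encoded in \eqref{formula}, together with the constraint that jumps in the upper numbering occur at integers. First I would observe that since ${}_lJ$ is cyclic of order $p^t$, its subgroups form a chain ${}_lJ = {}_lJ(0) \supsetneq {}_lJ(1) \supsetneq \cdots \supsetneq {}_lJ(t) = 1$, where ${}_lJ(i)$ has order $p^{t-i}$; and by the discussion preceding the lemma, every one of these subgroups appears as some lower ramification group ${}_lJ_n$, since the successive quotients ${}_lJ_n/{}_lJ_{n+1}$ are trivial or cyclic of order $p$ and the filtration is exhaustive and eventually trivial. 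So for each $i$ there is a largest index $m_i$ with ${}_lJ_{m_i} = {}_lJ(i)$; I want to show the block of indices on which the value is ${}_lJ(i)$ has length at least $p^i$, i.e., ${}_lJ_n = {}_lJ(i)$ for $p^i$ consecutive values of $n$.

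The key computation is to track $\phi(n)$ across a jump. Suppose $n$ is an integer with ${}_lJ_n \supsetneq {}_lJ_{n+1}$, so a jump occurs between $n$ and $n+1$; by the remark in the text, $\phi(n) \in \Z$ at such $n$. From \eqref{formula} we get $\phi(n) - \phi(n-1) = |{}_lJ_n|/|{}_lJ_0|$. So as $n$ ranges over a block $[a+1, a+k]$ on which ${}_lJ_n$ is constantly equal to some subgroup $J'$ of order $p^s$, the increments $\phi(n) - \phi(n-1)$ are each $p^s/p^t = p^{s-t}$ (using $|{}_lJ_0| = p^t$, since ${}_lJ$ is totally ramified so $ {}_lJ_0 = {}_lJ$). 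For $\phi$ to return to an integer value at the end of the block — which it must, since the next jump occurs at an integer argument of $\phi$ — the total increment $k \cdot p^{s-t}$ over the block must be an integer, forcing $p^{t-s} \mid k$. Since the subgroup of order $p^{t-i}$ corresponds to $s = t-i$, this gives $p^i \mid k$, hence $k \geq p^i$, which is exactly the claim with $n_i$ chosen as $a$ (the index just before the block). I would also need to handle the boundary: $\phi(n_i)$ being an integer at the start requires knowing the previous jump left us at an integer, which follows inductively starting from $\phi(0) = 0$ (or $\phi$ near $0$), and checking the first block separately where ${}_lJ_n = {}_lJ$ for small $n$.

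The main obstacle I anticipate is bookkeeping the endpoints of the filtration correctly: one must be careful that \eqref{formula} as stated begins the sum at $i=1$, so the relation between $\phi$ and the $J_i$ near $u = 0$ needs separate attention, and one must confirm that "the next jump occurs at an integer argument of $\phi$" is applied in the right direction (it is a statement about the value $\phi(n)$ when ${}_lJ_n \neq {}_lJ_{n+1}$, which is what we have at the top of a block). A secondary subtlety is ensuring that each subgroup really does occur — this was asserted in the text via $\phi(n+1) - \phi(n) \leq 1$ forcing the lower-numbering quotients to be trivial or order $p$, so no subgroup in the chain can be skipped — and I would simply cite that. Once the per-block divisibility $p^{t-s} \mid k$ is established, assembling the statement of Lemma \ref{lemma J} is immediate: set $n_i$ to be the index immediately before the block where ${}_lJ_n$ first equals ${}_lJ(i)$, and the block then contains $n_i+1, \dots, n_i+p^i$ as required.
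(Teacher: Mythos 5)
Your argument is correct and is essentially the one the paper intends (the paper defers to the integrality of upper-numbering jumps, formula \eqref{formula}, and the fact that every subgroup of $_lJ$ occurs as a lower ramification group, which together give exactly your per-block divisibility $p^{i}\mid k$; this is the content of the example on p.~76 of Serre that the paper cites). The endpoint issues you flag are handled exactly as you suggest: both ends of a block are lower-numbering jumps, so $\phi$ is integral at both, and the constant offset in \eqref{formula} is harmless since only increments of $\phi$ enter the argument.
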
 
\begin{remark}
\begin{itemize}
\emph{
\item
That jumps in the upper ramification groups of $_lJ^v$ occur only at
integral values of $v$ also follows from the Hasse-Arf Theorem, which
says that if $L/K$ is an abelian extension of local fields with perfect residue
fields, then jumps in the upper ramification groups $G^v$ of $L/K$ occur
only at integral values of $v$ \cite{MR554237}.
\item One can more directly obtain that $_lJ_n/_lJ_{n+1}$ is isomorphic to a
  subgroup of $U^n/U^{n+1}$ by considering the map $_lJ_n/_lJ_{n+1}
  \hookrightarrow U^n/U^{n+1}$ induced by $\sigma \mapsto
  \sigma(\beta)/\beta$ where $\beta$ is a uniformizer of $\hat{E}_l$ \cite{MR554237}.}
\end{itemize}
\end{remark}
\par We use the information just obtained about the ramification
groups of $_lJ$ to obtain bounds for $rd(L_l)$, which will complete
the proof of Case II. Since $E_l\subseteq L_l$, it suffices to show that
$\limsup_{l\to \infty}rd(E_l)=\infty$. By Section 2, it then suffices to show that $\limsup_{l \to \infty}rd(\hat{E}_l)=\infty$. The $n$th
ramification group of $\hat{E}_l/\hat{M}_l$ is a subgroup of the $n$th
ramification group of $\hat{E}_l/\Q_p$. By Section 2, it suffices to
show that 
\begin{align}
\limsup_{l \to \infty}
\frac{1}{|_l\Gamma_0|}\sum_{i=0}^{\infty}(|_l\Gamma_i|-1)=\infty, \label{Gamma}
\end{align}
where $_l\Gamma_n$ are the ramification groups of
$\hat{E}_l/\Q_p$. Let $_l\gamma_n$ denote the ramification groups of
$\hat{E}_l/\hat{M}_l$. It follows from Lemma \ref{lemma J} that
\begin{align}
\frac{1}{|_l\gamma_0|}\sum_{i=0}^{\infty}(|_l\gamma_i|-1)\geq t_l-1,  \label{t}
\end{align}
where $|_lJ|=p^{t_l}$. Note that $\limsup_{l \to \infty}t_l=\infty$. We
also have
\begin{align*}
|_l\Gamma_0|=e(\hat{E}_l/\hat{F}_l)e(\hat{F}_l/\hat{K})e(\hat{K}/\Q_p).
\end{align*}
The quantity $e(\hat{K}/\Q_p)$ is fixed and $e(\hat{F}_l/\hat{K})=1$.  By the definition of $_lJ$,
\begin{align*}
|_l\gamma_0|=|_lJ|\geq \frac{1}{C}|\Eva(\hat{E}_l/\hat{F}_l)|=\frac{1}{C}e(\hat{E}_l/\hat{F}_l).
\end{align*}
(Recall the definition of $C$ from the beginning of the discussion
after Lemma \ref{l1}). Thus we obtain the second inequality in:
\begin{align*}
\frac{1}{|_l\Gamma_0|}\sum_{i=0}^{\infty}(|_l\Gamma_i|-1)\geq
\frac{1}{|_l\Gamma_0|}\sum_{i=0}^{\infty}(|_l\gamma_i|-1) \geq
\frac{1}{C|_l\gamma_0| e(\hat{K}/\Q_p)}\sum_{i=0}^{\infty}(|_l\gamma_i|-1),
\end{align*}
which, using \eqref{t}, gives the desired result \eqref{Gamma},
completing Case II
. $\qed$

\par This completes the proof that $X_{N,K}$ is finite when $K/\Q$ is
Galois. We use this to show that $Y_{N,K}$ is finite. We initially maintain the assumption that $K/\Q$ is
Galois. Suppose $L \in Y_{N,K}$. We will show there is a constant $A:=A_{N;K}$,
independent of $L$, such that $rd(\tilde{L})< A_{N;K}$, where $\tilde{L}$
denotes the Galois closure of $L/\Q$. Suppose we have shown this to be
true. The field $\tilde{L}$ is a compositum of abelian
extensions of $K$, thus abelian over $K$. The finiteness of $X_{A_{N;K},K}$ implies that there is
some large number field $F_A$, which can be chosen independently of $L$, such that $\tilde{L}$, and thus $L$, is
contained in $F_A$. So, upon showing the existence of $A_{N;K}$, we
will have shown that $Y_{N,K}$ is finite under the assumption that $K/\Q$ is
Galois.\newline 
\par \textbf{Existence of $A_{N;K}$}: For any $\sigma:L \hookrightarrow \C$, $\sigma$ maps a prime $\mathfrak{P}$
of $L$
ramifying in $L/K$ to a prime $\mathfrak{P}'$ of $\sigma(L)$ ramifying in $\sigma(L)/K$, where
$\mathfrak{P}$ and $\mathfrak{P}'$ lie over the same rational prime $p$. It follows that the rational
primes ramifying in $L/\Q$ are the same as those ramifying in
$\tilde{L}/\Q$, and using the fact that the discriminant is the norm
of the different, one checks that $rd(L)=rd(\sigma(L))$. The degree $[L:\Q]$ is the number of embeddings $\sigma:L \hookrightarrow
\C$. For each such $\sigma$, $\tau \sigma (L)=\sigma(L)$ for all embeddings
$\tau : \sigma(L) \to \C$ that fix $K$. There are
$[\sigma(L):K]=[L:K]$ such $\tau$, so the number of
distinct fields $\sigma(L)$ as $\sigma$ ranges over all embeddings $L
\hookrightarrow \C$ is at most $[L:\Q]/[L:K]=[K:\Q]$. 

\begin{lemma}\label{mult}
Let $E$ and $F$ be number fields, both Galois over $E\cap F$. Then
\begin{align*}
rd(EF)rd(E\cap F)\leq rd(E)rd(F).
\end{align*} 
\end{lemma}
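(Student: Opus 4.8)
The plan is to reduce the inequality to the tower formula \eqref{(1)} together with a base-change divisibility between differents. For any number field $M$ containing $E\cap F$, applying \eqref{(1)} to the tower $M/(E\cap F)/\Q$ gives $d_M=N_{(E\cap F)/\Q}\big(d_{M/(E\cap F)}\big)\,d_{E\cap F}^{[M:E\cap F]}$, and hence
\begin{align*}
rd(M)=rd(E\cap F)\cdot N_{(E\cap F)/\Q}\big(d_{M/(E\cap F)}\big)^{1/[M:\Q]},
\end{align*}
since $d_{E\cap F}^{[M:E\cap F]/[M:\Q]}=d_{E\cap F}^{1/[E\cap F:\Q]}=rd(E\cap F)$. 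Substituting $M=E$, $M=F$, and $M=EF$ and cancelling the common positive factor $rd(E\cap F)^2$, the assertion $rd(EF)\,rd(E\cap F)\le rd(E)\,rd(F)$ is equivalent to
\begin{align*}
N_{(E\cap F)/\Q}\big(d_{EF/(E\cap F)}\big)^{1/[EF:\Q]}\le N_{(E\cap F)/\Q}\big(d_{E/(E\cap F)}\big)^{1/[E:\Q]}\,N_{(E\cap F)/\Q}\big(d_{F/(E\cap F)}\big)^{1/[F:\Q]}.
\end{align*}

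Therefore it is enough to prove the divisibility of ideals of $O_{E\cap F}$
\begin{align*}
d_{EF/(E\cap F)}\ \mid\ d_{E/(E\cap F)}^{[EF:E]}\,d_{F/(E\cap F)}^{[EF:F]}.
\end{align*}
Indeed, applying $N_{(E\cap F)/\Q}$ preserves divisibility in $\Z$, and taking $[EF:\Q]$-th roots, together with $[EF:E]/[EF:\Q]=1/[E:\Q]$ and $[EF:F]/[EF:\Q]=1/[F:\Q]$, turns this into the displayed inequality. To obtain the divisibility I would use the standard multiplicativity of the different in the tower $EF/E/(E\cap F)$, namely $D_{EF/(E\cap F)}=D_{EF/E}\big(D_{E/(E\cap F)}O_{EF}\big)$, together with the base-change bound $D_{EF/E}\mid D_{F/(E\cap F)}O_{EF}$. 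Combining them gives $D_{EF/(E\cap F)}\mid\big(D_{E/(E\cap F)}O_{EF}\big)\big(D_{F/(E\cap F)}O_{EF}\big)$, and applying $N_{EF/(E\cap F)}$---which carries $D_{E/(E\cap F)}O_{EF}$ to $d_{E/(E\cap F)}^{[EF:E]}$, carries $D_{F/(E\cap F)}O_{EF}$ to $d_{F/(E\cap F)}^{[EF:F]}$, and carries $D_{EF/(E\cap F)}$ to $d_{EF/(E\cap F)}$---yields exactly the asserted divisibility.

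The one step carrying any real content is the base-change bound $D_{EF/E}\mid D_{F/(E\cap F)}O_{EF}$, which I would verify one prime at a time. Fix a prime $\mathfrak{P}$ of $EF$ and let $\mathfrak{P}'$, $\mathfrak{q}$, $\mathfrak{p}$ be the primes of $E$, $F$, $E\cap F$ lying below it. Passing to completions, $(EF)_{\mathfrak{P}}$ is the compositum of $E_{\mathfrak{P}'}$ and $F_{\mathfrak{q}}$ over $(E\cap F)_{\mathfrak{p}}$, and the extension $F_{\mathfrak{q}}/(E\cap F)_{\mathfrak{p}}$ of local fields is monogenic, say $O_{F_{\mathfrak{q}}}=O_{(E\cap F)_{\mathfrak{p}}}[\beta]$. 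Let $f$ and $g$ be the minimal polynomials of $\beta$ over $(E\cap F)_{\mathfrak{p}}$ and over $E_{\mathfrak{P}'}$ respectively; then $g\mid f$ in $O_{E_{\mathfrak{P}'}}[x]$ (both are monic), so $(g'(\beta))\mid(f'(\beta))$, while the standard bound $D_{(EF)_{\mathfrak{P}}/E_{\mathfrak{P}'}}\mid(g'(\beta))O_{(EF)_{\mathfrak{P}}}$ holds because $(EF)_{\mathfrak{P}}=E_{\mathfrak{P}'}(\beta)$ with $O_{E_{\mathfrak{P}'}}[\beta]\subseteq O_{(EF)_{\mathfrak{P}}}$. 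Since $(f'(\beta))O_{(EF)_{\mathfrak{P}}}=D_{F_{\mathfrak{q}}/(E\cap F)_{\mathfrak{p}}}O_{(EF)_{\mathfrak{P}}}$, this gives $D_{(EF)_{\mathfrak{P}}/E_{\mathfrak{P}'}}\mid D_{F_{\mathfrak{q}}/(E\cap F)_{\mathfrak{p}}}O_{(EF)_{\mathfrak{P}}}$, and assembling over all $\mathfrak{P}$ yields the global statement. (The hypotheses that $E$ and $F$ are Galois over $E\cap F$ are convenient---for instance they guarantee $EF/(E\cap F)$ is Galois---but the argument above uses only that $EF$ is the compositum of $E$ and $F$ over $E\cap F$.) Everything else is \eqref{(1)} and degree bookkeeping, so this localization is the only real obstacle. $\qed$
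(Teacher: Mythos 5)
Your proof is correct, and it takes a genuinely different route to the key step. Both arguments begin with the same reduction via \eqref{(1)}: everything comes down to bounding the contribution of $EF/E$ by that of $F/(E\cap F)$, equivalently (in your symmetric formulation) to the divisibility $d_{EF/(E\cap F)}\mid d_{E/(E\cap F)}^{[EF:E]}\,d_{F/(E\cap F)}^{[EF:F]}$. The paper proves the crucial local estimate by Galois-theoretic means: it invokes Hilbert's formula $v(D)=\sum_i(|G_i|-1)$ and the fact that restriction $\Eva(EF_{\mathfrak{Q}}/E_{\mathfrak{P}})\to\Eva(F_{\q}/K_{\p})$ carries $G_{(\mathfrak{Q}/\mathfrak{P}),i}$ into $G_{(\q/\p),\lceil (i+1)/e-1\rceil}$, which yields $\alpha\le e\gamma$ and then the exponent comparison by $efg$ bookkeeping. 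You instead quote the tower multiplicativity of the different together with the base-change divisibility $D_{EF/E}\mid D_{F/(E\cap F)}O_{EF}$, which you verify locally via monogenicity of local rings of integers and the bound $D\mid(g'(\beta))$ for an integral generator $\beta$; note that your inequality $v_{\mathfrak{Q}}(D_{EF/E})\le v_{\mathfrak{Q}}(D_{F/(E\cap F)}O_{EF})$ is exactly the paper's $\alpha\le e\gamma$ in different clothing, so the two arguments converge at the same pivot and differ only in how they establish it. What your route buys is that it leans on a completely standard proposition (the behaviour of the different under base change) rather than on ramification-group combinatorics, and, as you observe, it dispenses with the hypothesis that $E$ and $F$ be Galois over $E\cap F$; what the paper's route buys is self-containedness within the ramification-group framework already set up in Section \ref{sec2}. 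All the bookkeeping in your reduction (the norm identities $N_{EF/(E\cap F)}(\mathfrak{a}O_{EF})=N_{E/(E\cap F)}(\mathfrak{a})^{[EF:E]}$ and the exponent arithmetic $[EF:E]/[EF:\Q]=1/[E:\Q]$) checks out.
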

 
\begin{proof}
For notational ease, put $K=E\cap F$. Using \eqref{(1)} from Section
\ref{intro} multiple times, we have
\begin{align*}
 rd(F)&=N_{K/\Q}(d_{F/K})^{\frac{1}{[F:\Q]}}rd(K), \textrm{      and
  }\\
 rd(EF)& =(N_{K/\Q}d_{EF/K})^{\frac{1}{[EF:\Q]}}rd(K) \\
&=N_{K/\Q}\Big(\big(N_{E/K}d_{EF/E}\big)\big(d_{E/K}^{[EF:E]}\big)\Big)^{\frac{1}{[EF:\Q]}}rd(K)\\
&=N_{E/\Q}(d_{EF/E})^{\frac{1}{[EF:\Q]}}N_{K/\Q}(d_{E/K})^{\frac{1}{[E:\Q]}}rd(K)\\
&=N_{E/\Q}(d_{EF/E})^{\frac{1}{[EF:\Q]}}rd(E).
\end{align*}
Thus we must show $N_{E/\Q}(d_{EF/E})^{\frac{1}{[EF:\Q]}}\leq
N_{K/\Q}(d_{F/K})^{\frac{1}{[F:\Q]}}$. 
\begin{figure}
  \setlength{\unitlength}{2.5cm}
  \begin{picture}(1,1.8)(0,-.6)
    \put(1.99,-.5){$\Q$}
    \put(1.83,.03){$E\cap F=K$}
    \put(1.5,.5){$E$}
    \put(2.5,.5){$F$}
    \put(1.95,1){$EF$}
    \qbezier(2.04,-.34)(2.04,-.34)(2.04,-.03)
    \qbezier(1.96,.18)(1.96,.18)(1.62,.44)
    \qbezier(2.14, .18)(2.14,.18)(2.48,.44)
    \qbezier(1.62,.63)(1.62,.63)(1.96,.96)
    \qbezier(2.46,.63)(2.46, .63)(2.14, .96)
  \end{picture}
  \begin{picture}(1,1.8)(-1.3,-.6)
    \put(1.5,.5){$\mathfrak{P}$}
    \put(2.5,.5){$\q$}
    \put(1.99,-.5){$p$}
    \put(2,1){$\mathfrak{Q}$}
       
    \put(2.01,.06){$\p$}
    \qbezier(2.04,-.37)(2.04,-.37)(2.04,-.03)
    \qbezier(1.96,.18)(1.96,.18)(1.62,.44)
    \qbezier(2.14, .18)(2.14,.18)(2.48,.44)
    \qbezier(1.62,.63)(1.62,.63)(1.96,.96)
    \qbezier(2.46,.63)(2.46, .63)(2.14, .96)
  \end{picture}

\end{figure}
 
\par 
Let $\p$ be a prime of $K$ above $p$. It suffices to show that the
exponent $a$ of $\p$ in $d_{F/K}^{[EF:F]}$ is greater than or
equal to the exponent $b$ of $\p$ in $N_{E/K}(d_{EF/E})$. Let $\q$ be a
prime of $F$ above $\p$ and $\mathfrak{Q}$ a prime of $EF$ above
$\q$. Let $\mathfrak{P}$ be the prime of $E$ below $\mathfrak{Q}$.

It follows from the definition of ramification groups that the natural
restriction map $\Eva(EF_{\mathfrak{Q}}/E_{\mathfrak{P}})\to
\Eva(F_{\q}/K_{\p})$ takes $G_{(\mathfrak{Q}/\mathfrak{P}),i}$
(injectively) into $G_{(\q/\p),\lceil \frac{i+1}{e}-1 \rceil}$, where
$\lceil n \rceil$ denotes the least integer greater than or equal to
$n$, and $e=e(\mathfrak{Q}/\q)$. From this we find that
\begin{align*}
\alpha := \sum_{i=0}(|G_{(\mathfrak{Q}/\mathfrak{P}),i}|-1) \leq e
\sum_{i=0}(|G_{(\q/\p),i}|-1):= e\gamma . 
\end{align*}
One checks that 
\begin{align*}
a= \gamma g_{F/K}(\p)f_{F/K}(\p)[EF:F] \vspace{.4cm} \textrm{ and }
\vspace{.4cm} b=f_{E/K}(\p)g_{E/K}(\p)
g_{EF/E}(\mathfrak{P})f_{EF/E}(\mathfrak{P}) \alpha. 
\end{align*}
We can rewrite $b$ as 
\begin{align*}
\alpha f_{EF/F}(\q)g_{EF/F}(\q)f_{F/K}(\p)g_{F/K}(\p).
\end{align*}
Using $[EF:F]=ef_{EF/F}(\q)g_{EF/F}(\q)$ and $\alpha \leq e
\gamma$, we find that $b \leq a$, as desired. This complete the proof
of the lemma. \end{proof}

 For our purposes, we only need the weaker result $rd(EF)\leq
rd(E)rd(F)$. Returning to our previous notation, by Lemma \ref{mult} and induction on $n$ we obtain
\begin{align*}
rd(\sigma_1(L)\cdots \sigma_n(L))\leq \prod_{i=1}^nrd(\sigma(L_i))=rd(L)^n.
\end{align*}
Thus $rd(\tilde{L})\leq rd(L)^{[K:\Q]}$, and we may take
$A_{N;K}=N^{[K:\Q]}$. 
\newline

\par Thus, as long as $K/\Q$ is Galois, we have shown that $Y_{N,K}$
is finite. We now show that the assumption
$K/\Q$ Galois is unnecessary. Recall the following theorem from
algebraic number theory (see, e.g., \cite{MR1697859} or \cite{MR554237}).

\begin{theorem}\label{Ore}
Let $E/F$ be a finite Galois extension of number fields of degree $n$ and let $\p$ be a
prime of $F$ lying below a prime $\mathfrak{P}$ of $E$ with ramification
index $e=e(\mathfrak{P}/\p)$. Then
\begin{align*}
v_{\p}(d_{E/F})\leq n(1+v_{\p}(e)-1/e).
\end{align*}

\end{theorem}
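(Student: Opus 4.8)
The statement to prove is Theorem~\ref{Ore}, the classical discriminant bound $v_{\p}(d_{E/F}) \leq n(1 + v_{\p}(e) - 1/e)$ for a Galois extension $E/F$ of degree $n$ with ramification index $e$ at $\p$.

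\textbf{Proof proposal.}

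The plan is to reduce immediately to the local situation and then run Hilbert's different formula $v_{\mathfrak{P}}(D_{E/F}) = \sum_{i \geq 0}(|G_i| - 1)$ from Section~\ref{sec2}. First I would pass to the completions $E_{\mathfrak{P}}/F_{\p}$: since $v_{\p}(d_{E/F}) = f \cdot g \cdot v_{\mathfrak{P}}(D_{E/F})$ divided out appropriately — more precisely, by \eqref{3}, $v_{\p}(d_{E/F}) = fg\sum_{i=0}^\infty(|G_i|-1)$ where $G_i$ are the ramification groups of $\mathfrak{P}/\p$, $f$ the residue degree, $g$ the number of primes above $\p$ — and since $n = efg$, it suffices to prove $\sum_{i \geq 0}(|G_i| - 1) \leq e(1 + v_\p(e) - 1/e) = e + e\,v_\p(e) - 1$. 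So the whole theorem is an estimate on $\sum_{i\geq 0}(|G_i|-1)$ purely in terms of $e = |G_0|$ and $v_\p(e)$, the power of the residue characteristic $p$ dividing $e$.

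Next I would split the sum according to the structure of the ramification filtration. The inertia group $G_0$ has order $e$; the wild inertia group $G_1$ is its $p$-Sylow subgroup, of order $p^m$ where $m = v_\p(e)$ (here $p$ is the residue characteristic), and $G_0/G_1$ is cyclic of order $e_0 := e/p^m$ prime to $p$. The $i=0$ term contributes $|G_0| - 1 = e - 1$. For the tame part it is enough to observe $|G_i| \le p^m$ for all $i \ge 1$. Now I would bound the number of nontrivial higher groups. The key input is that $G_i$ is trivial for $i$ sufficiently large and that the successive quotients $G_i/G_{i+1}$ for $i \ge 1$ embed into the additive group of the residue field (so are elementary abelian $p$-groups) — the same facts about the lower numbering already invoked in the Case~II argument. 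The cleanest route is via the conductor-discriminant / Hasse–Arf bookkeeping: writing $G_1 = G(1) \supsetneq G(2) \supsetneq \cdots$ for the distinct wild groups and letting each $|G(j)| = p^{a_j}$, the contribution $\sum_{i \ge 1}(|G_i| - 1)$ is a sum over jumps; but the slick estimate is simply to note that since $G_1/G_2, G_2/G_3, \dots$ each have order at least $p$, the filtration $G_1 \supseteq G_2 \supseteq \cdots$ can have at most $m$ strict drops, so there are at most $m$ "blocks," and within the block where $|G_i| = p^k$ there are at most... — this is where the real combinatorics happens. The standard fact (Serre, \emph{Local Fields}, IV) is exactly that $\sum_{i \ge 1}(|G_i| - 1) \le e - e_0 + (p^m - 1)v_\p(e)$ type bounds, and assembling gives $\sum_{i\ge 0}(|G_i|-1) \le (e-1) + $ wild part $\le e - 1 + e\cdot v_\p(e)$ after bounding each of at most $v_\p(e)$ wild contributions by $p^m \le e$. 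Dividing by $e$ recovers $1 + v_\p(e) - 1/e$.

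\textbf{Main obstacle.} The genuine difficulty is the sharp bookkeeping of the wild part $\sum_{i \ge 1}(|G_i| - 1)$: one must see that although individual groups $G_i$ can be as large as $p^m = e/e_0$, the number of indices $i$ at which $G_i$ takes each given value is tightly controlled, so that the total is at most $(e - e_0) + (\text{something}) \le e\,v_\p(e) - (e - 1) \cdot(\text{correction})$, landing exactly at the claimed bound. Getting the constant $1/e$ correct (rather than something weaker) requires carefully separating the $i = 0$ term from the $i \ge 1$ terms and using $|G_1| \le e/e_0 \le e$ together with the fact that there are at most $v_\p(e)$ "levels" in the wild filtration; this is precisely the argument in Serre's book that I would cite or reproduce in a few lines. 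Everything else — the reduction to completions, the identity $n = efg$, and Hilbert's formula — is routine and already available from Section~\ref{sec2}.
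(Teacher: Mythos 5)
The paper does not actually prove Theorem \ref{Ore}: it is recalled as a standard result with citations to the literature (Serre, Neukirch), so there is no in-paper argument to compare against. Your global-to-local reduction is correct: by \eqref{3} and $n=efg$ it suffices to prove $\sum_{i\ge 0}(|G_i|-1)\le e-1+e\,v_{\p}(e)$, which is exactly Serre's bound $v_{\mathfrak{P}}(D_{E/F})\le e-1+v_{\mathfrak{P}}(e)$ on the different, since $v_{\mathfrak{P}}(e)=e\,v_{\p}(e)$. The problem is that the part of your sketch that is supposed to establish this local inequality does not work. First, you identify $v_{\p}(e)$ with $m$, ``the power of the residue characteristic dividing $e$''; in fact $v_{\p}(e)=m\,v_{\p}(p)$, and $v_{\p}(p)>1$ whenever $F/\Q$ is ramified at $\p$. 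The sharper intermediate claim you are implicitly steering toward, $\sum_{i\ge 1}(|G_i|-1)\le e\,m$, is false: for $\Q_p(\zeta_{p^2})/\Q_p(\zeta_p)$ one has $e=p$, $m=1$, yet $G_1=\cdots=G_{p-1}$ has order $p$, so the wild sum equals $(p-1)^2>p$ for $p\ge 3$. Second, and more fundamentally, counting the number of strict drops in $G_1\supseteq G_2\supseteq\cdots$ (at most $m$ of them) says nothing about the number of indices $i$ with $G_i\ne 1$: a block on which $G_i$ is constant can persist for many consecutive $i$ (as the example above shows), and the entire content of the bound is the control of the lengths of these blocks, equivalently of the location of the last break. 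Your sketch defers precisely this point to ``the real combinatorics'' and ``the standard fact in Serre,'' which amounts to citing the estimate you are trying to prove.

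If you want a self-contained argument, the efficient route avoids ramification groups entirely. Reduce to the local, totally ramified case (pass to the inertia field; the unramified part contributes nothing to the different). Let $\pi$ be a uniformizer of $E_{\mathfrak{P}}$ with Eisenstein minimal polynomial $f(X)=X^e+a_{e-1}X^{e-1}+\cdots+a_0$ over $F_{\p}$; then $D_{E_{\mathfrak{P}}/F_{\p}}=(f'(\pi))$, and the terms $i\,a_i\pi^{i-1}$ of $f'(\pi)$ have pairwise distinct valuations because $v_{\mathfrak{P}}(i\,a_i\pi^{i-1})\equiv i-1 \pmod{e}$. Hence $v_{\mathfrak{P}}(f'(\pi))=\min_i v_{\mathfrak{P}}(i\,a_i\pi^{i-1})\le v_{\mathfrak{P}}(e\pi^{e-1})=v_{\mathfrak{P}}(e)+e-1$. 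Feeding this into your (correct) reduction yields the theorem; note this proof does not even require $E/F$ to be Galois. If instead you insist on arguing through ramification groups, you must supply the bound on the breaks (e.g., via the surjection of $U^n$ onto the upper ramification groups together with the fact that $U^n\subseteq (U^{n-e_{F_\p}v_\p(p)/(p-1)})^p$ for $n$ large), which is a genuinely nontrivial ingredient absent from your proposal.
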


\par Let $\tilde{K}$ denote the Galois closure of $K/\Q$ and suppose
$L/K$ is abelian with $rd(L)\leq N$. We
show there is a constant $C_{N,K}$ such that
\begin{align*}
 rd(L\tilde{K})\leq C_{N,K}rd(L). 
\end{align*}

 Suppose we know this to be true. Since
$\tilde{K}/\Q$ is Galois, there exists a large number field
$F_{C_{N,K}N,\tilde{K}}$ such that $E\subseteq F_{C_{N,K}N,\tilde{K}}$ for any $E \in
Y_{C_{N,K}N,\tilde{K}}$. In particular, since $L\tilde{K}/\tilde{K}$ is
abelian, $L\tilde{K} \in Y_{C_{N,K}N,\tilde{K}}$. Thus
\begin{align*}
 rd(L)\leq N \implies rd(L\tilde{K})\leq C_{N,K}N \implies
 L\subseteq L\tilde{K}\subseteq F_{C_{N,K}N, \tilde{K}} \implies Y_{N,K} \textrm{ is
   finite }.
\end{align*}
\par We show such a $C_{N,K}$ exists. It follows from \eqref{(1)} that 
\begin{align}
rd(L\tilde{K})=\big(N_{L/\Q}d_{L\tilde{K}/L}\big)^{\frac{1}{[L\tilde{K}:\Q]}}rd(L). \label{rdk} 
\end{align}

\par The primes ramifying in $\tilde{K}/K$ are fixed with $K$, so
there exists a finite set of rational primes $R$, independent of $L$, such that every prime of $L$ ramifying
in $L\tilde{K}/L$ lies above a prime in $R$. Let $\q$ be a
prime of $L$ ramifying in $L\tilde{K}/L$ lying above a rational prime
$q$.
\begin{figure}[h]
  \setlength{\unitlength}{2.5cm}
  \begin{picture}(2,1.6)(-1.2,-.5)
    \put(1.97,.03){$K$}
    \put(1.5,.5){$L$}
    \put(2.5,.5){$\tilde{K}$}
    \put(1.95,1){$L\tilde{K}$}
    \put(1.97,-.5){$\Q$}
    \put(1.75, -.5){$q$}
    \put(1.25 ,.5){$\q$}
    \qbezier(1.77, -.4)(1.77, -.4)(1.35,.4)
    \qbezier(2.04,-.34)(2.04,-.34)(2.04,-.03)
    \qbezier(1.96,.18)(1.96,.18)(1.62,.44)
    \qbezier(2.14, .18)(2.14,.18)(2.48,.44)
    \qbezier(1.62,.63)(1.62,.63)(1.90,.96)
    \qbezier(2.46,.63)(2.46, .63)(2.23, .96)
  \end{picture}
 
\end{figure}

Let $e=e_{L\tilde{K}}(\q)$ (note $L\tilde{K}/L$ is Galois). By Theorem
\ref{Ore}, the exponent of $\q$ in $d_{L\tilde{K}/L}$ is at most 
\begin{align*}
[L\tilde{K}:L]\big(1+v_{\q}(e)-1/e\big)\leq [L\tilde{K}:L]\big(1+v_{\q}(e)\big).
\end{align*}
We have the analogous bound for every prime $\q$ above $q$ in $L$. Let
$e_{\q}=e_{L/\Q}(\q)$ and $f_{\q}=f_{L/\Q}(\q)$ (do not confuse $e$
with $e_\q$). We obtain:
\begin{align*}
v_q(N_{L/\Q}d_{L\tilde{K}/L})&\leq
\sum_{\q|q}[L\tilde{K}:L]f_{\q}(v_{\q}(e)+1)\\
&=[L\tilde{K}:L]\sum_{\q|q} f_{\q}(e_{\q}v_q(e)+1)\\
&\leq [L\tilde{K}:L][L:\Q][L\tilde{K}:L],
\end{align*}
where we have used the fact that $\sum f_{\q}e_{\q}=[L:\Q]$ and the obvious
inequality $v_q(e)+1\leq [L\tilde{K}:L]$ to obtain the final
inequality above. In light of \eqref{rdk}, taking 
\begin{align*}
C_{N,K}=\prod_{q \in R}q^{[\tilde{K}:K]}
\end{align*}
gives the result. 
\newline
\par When $n=1$, this completes the proof of Theorem \ref{abelian} in
its entirety. We obtain the Theorem for general $n$ by induction:

 Let $L \in Y_{n, N, K}$. Let $G^{(i)}$ denote the $i$th derived subgroup of $G$, so $n$ is the smallest integer with
$G^{n}=1$. The quotient $G/G^{n-1}$ corresponds to an
intermediate field $L_0$ of $L/K$ with
$G/G^{n-1}\cong \Eva(L_0/K)$. $L_0\subseteq L$ implies $rd(L_0)\leq
rd (L)$, so $L_0 \in Y_{n-1, N, K}$. By induction there are only a finite number of
possibilities for $L_0$. For each such $L_0$, $\Eva(L/L_0)=
G^{n-1}$ is abelian, so by the $n=1$ case of Theorem \ref{abelian}
applied to $L_0$ and $N$, there are only finitely many possibilities for $L$. 
$\qed$

\begin{remark}
\emph{
The proof of the Theorem \ref{abelian} does not lend itself to an
effective bound of the size of $Y_{n,N,K}$ in terms of $n$, $N$ and $K$. }
\end{remark}

\begin{corollary}Fix $N > 0 $, a positive integer $n$, and a number field $K$. Then 
\begin{align*}
\#\{L: \emph{\textit{L/K is solvable,   $\Eva(L/K)\subseteq \GL_n(\F)$
      and  $rd(L) \leq N$}}\}
\end{align*}
is finite, where $\F$ is a finite field that is allowed to vary with $L$. 
\end{corollary}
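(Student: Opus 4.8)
The plan is to derive the corollary directly from Theorem \ref{abelian}. The hypothesis $\Eva(L/K)\subseteq\GL_n(\F)$ will be used only to bound the solvable length of $L/K$ by a quantity depending on $n$ alone, uniformly over all the finite fields $\F$ that can occur; once that is done, the set in question sits inside a finite union of sets $Y_{m,N,K}$ and we appeal to Theorem \ref{abelian}.

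The one substantive input is a classical fact about solvable linear groups: there is a function $\ell(n)$, depending only on $n$ and \emph{not} on the ground field, such that every solvable subgroup of $\GL_n(\F)$ over any field $\F$ has derived length at most $\ell(n)$ — in fact one may take $\ell(n)=O(\log n)$. This goes back to Zassenhaus and follows from the structure theory of solvable matrix groups; see, e.g., D.~A.~Suprunenko, \emph{Matrix Groups}, or B.~A.~F.~Wehrfritz, \emph{Infinite Linear Groups}. Granting this: if $L$ is a field in the set of the corollary, then $\Eva(L/K)$ is solvable and embeds in $\GL_n(\F)$ for some finite $\F$, so its derived series reaches the trivial group after at most $\ell(n)$ steps. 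Hence either $L=K$ or $L/K$ is solvable of length $m$ for some integer $1\le m\le \ell(n)$, in which case $L\in Y_{m,N,K}$.

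It follows that the set in the corollary is contained in $\{K\}\cup\bigcup_{m=1}^{\ell(n)}Y_{m,N,K}$. Each $Y_{m,N,K}$ is finite by Theorem \ref{abelian}, so this is a finite set, which proves the corollary. The only place where anything nontrivial happens is the uniform bound $\ell(n)$ on the derived length of solvable linear groups of degree $n$; in particular, the fact that this bound does not grow as $|\F|\to\infty$ is exactly what makes the infinitely many possible fields $\F$ harmless, and is the crux of the argument.
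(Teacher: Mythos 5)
Your proposal is correct and follows exactly the paper's argument: the corollary is deduced from Theorem \ref{abelian} via the theorem of Zassenhaus that the derived length of a solvable subgroup of $\GL_n(\F)$ is bounded by a function of $n$ alone, independent of the field $\F$. Your write-up simply makes explicit the containment of the set in a finite union $\{K\}\cup\bigcup_{m=1}^{\ell(n)}Y_{m,N,K}$, which the paper leaves implicit.
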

\begin{proof}
It is a theorem of Zassenhaus \cite{supr} that the length of any solvable subgroup
of $GL_n(\F)$ has a finite bound depending on $n$, independent of
$\F$ (in fact $\F$ need not even be finite). The result now follows
from Theorem \ref{abelian}.
\end{proof} 

\section{Further Questions}
As mentioned in the introduction, a natural extension of Theorem
\ref{abelian} is to consider the size of various subsets
of $Z_{N,K}$. Kedlaya \cite{sqfreedisc}, generalizing work by Yamamoto
\cite{yamamoto}, has shown that for any $n$, there exist
infinitely many real quadratic number fields $K$ admitting an unramified degree $n$
extension $L$ with $\Eva(\tilde{L}/K)$ isomorphic to the alternating
group $A_n$, where $\tilde{L}$ denotes the Galois closure of
$L/K$. It is unknown (at least to the author), however, whether or not a fixed (say real quadratic)
number field may admit an unramified degree $n$ extension with Galois closure
having Galois group $A_n$ for infinitely many $n$. 
\par Fix a number field $K$. Suppose that $Z_{N,K}$ is infinite. One can ask whether
$Z_{N,K}$ must contain an infinite class field tower. Maire \cite{MR1741025} has proven
the existence of infinite unramified extensions of number fields with
class number one. Let $K$ be such a number field with infinite degree
unramified extension $L$. Maire constructs $L$ by constructing an
infinite class field tower of a finite unramified extension of $K$.
This leaves open the question of whether a general infinite unramified
extension $L/K$ must contain an infinite
class field tower--i.e., whether $\Eva(L/K)$ must have a pro-solvable
subquotient. One can also ask the same question but replace the condition $L/K$
unramified with the condition $rd(L)<N$ (meaning $rd(M)<N$ for every field $M$
between $L$ and $K$ with
$M/K$ finite). The answers to such questions are likely beyond the scope of
class field theory. 
\par Since there are only finitely many number fields of bounded root
discriminant of any fixed degree, finding infinite subsets of $Z_{N,K}$
amounts to finding $L \in Z_{N,K}$ with arbitrarily large
degree. One may alternatively consider how many number fields $L$ of
degree $n$ over $K$ with $rd(L)<N$ exist, with $n$ fixed and $N$
varying. Significant work in this direction has been done by Ellenberg and
Venkatesh \cite{MR2199231} and others. 

\begin{section}{Acknowledgments}
I would like to thank my advisor, Joe Silverman, for suggesting the
question of the finiteness of $Y_{N,K}$ and for his helpful
conversations. I would also like to thank the referee for his/her comments.

\end{section}

\bibliographystyle{plain}
\bibliography{myrefs}

\end{document}